\newtheorem{theorem}{Theorem}[section]
\newtheorem{proposition}[theorem]{Proposition}
\newtheorem{lemma}[theorem]{Lemma}
\newtheorem{corollary}[theorem]{Corollary}
\theoremstyle{definition}
\newtheorem{example}[theorem]{Example}
\theoremstyle{remark}
\newtheorem{remark}[theorem]{Remark}
\begin{document}

\title{Weighing matrices and spherical codes}
\date{\today}

\author{
Hiroshi Nozaki, Sho Suda \\ {\small Department of Mathematics Education,  Aichi University of Education}\\ {\small 1 Hirosawa, Igaya-cho, Kariya, Aichi 448-8542, Japan} \\ {\small \texttt{hnozaki@auecc.aichi-edu.ac.jp}, \texttt{suda@auecc.aichi-edu.ac.jp}}}

\maketitle
\begin{abstract}
Mutually unbiased weighing matrices (MUWM) are closely related to an antipodal spherical code with 4 angles.  
In the present paper, we clarify the relationship between MUWM and the spherical sets, and give the complete solution about the maximum size of a set of MUWM of weight 4 for any order.  Moreover we describe some natural generalization of a set of MUWM from the viewpoint of spherical codes, and determine several maximum sizes of the generalized sets. They include an affirmative answer 
of the problem  of  Best, Kharaghani, and  Ramp. 
\end{abstract}
\section{Introduction}
A weighing matrix $W$ of weight $k$ is a square $(\pm1,0)-$matrix $W$ of order $d$ such that $W W^T=k I$, where $I$ is the identity matrix of order $d$ and $W^T$ denotes the transpose of $W$. 
If $k=d$ holds, a weighing matrix is a Hadamard matrix.  The set of row vectors in $W$ is identified with a finite set on a sphere of dimension $d$ where
distinct two vectors are orthogonal. The spherical code equivalently gives the vertices of a cross-polytope.     
From the viewpoint of this relationship,  a set of mutually unbiased weighing matrices (MUWM), which is introduced in \cite{HKO}, is identified with an antipodal spherical code (or equivalently lines in $\mathbb{R}^d$) with only 4 angles which is decomposed into cross-polytopes.  
In the present paper, we determine the maximal sizes of sets of some MUWM or their generalized objects by the construction of spherical codes and related linear $\mathbb{Z}_2$, $\mathbb{Z}_4$-codes. The results include an affirmative answer to Conjecture 23 in \cite{BKR}. 

A weighing matrix is extensively studied as a generalization of a Hadamard matrix in combinatorics. 
The weighing matrices were classified for small orders and weights \cite{CRS86,O89,O92,O93,OM,HM}.  
Note that the classification of self-orthogonal codes is used to classify weighing matrices in \cite{HM}. 
 As a generalization of mutually unbiased bases, Holzmann, Kharaghani and Orrick defined mutually unbiased weighing matrices (MUWM) in \cite{HKO}.
Recently Best, Kharaghani, and Ramp studied MUWM further in \cite{BKR}. In particular they obtained the maximum numbers of MUWM of order $n$ and weight $w$ for $(n,w)=(7,4),(8,4)$. 
Actually the two maximum examples correspond to the roots in 
the $E_7$, $E_8$ lattices, but the authors did not mention it.  
This is one of motivations to study MUWM from the viewpoint of spherical codes. 

In this paper, we put forward a new concept, that is a set of mutually quasi-unbiased weighing matrices (MQUWM), 
it is a natural generalization of MUWM from the perspective of 
 antipodal spherical codes with 4 angles. 
As well as being a natural generalization,  MQUWM make MUWM (see Theorem~\ref{thm:mquwm}), and they are important 
objects relating to the existence of original MUWM. 
We prove that the existence of some spherical codes is equivalent to that of MQUWM, and we show the construction of the spherical codes from linear $\mathbb{Z}_2,\mathbb{Z}_4$-codes or root lattices. 
  As a consequence we obtain several maximum numbers of 
MQUWM or MUWM.  
  
The following is a summary of the paper. 
In Section~\ref{sec:2}, we prepare some basic definitions and results of MUWM, linear codes, and root lattices. 
In Section~\ref{sec:3}, we define MQUWM, and  give the relationship between MQUWM and spherical codes.
In Section~\ref{sec:code}, we determine the maximum number of MQUWM for some parameters by linear codes. 
They imply the answer of Conjecture 23 in \cite{BKR}. 
In Section~\ref{sec:4},  we give the maximum number of MQUWM for some basic parameters. 
In Section~\ref{subsec:frame}, we give the complete solution about the maximum size of a set of MUWM of weight 4 by the cross-polytope decomposition of the roots in root lattices.

\section{Preliminaries} \label{sec:2}
In this section, we introduce some basic definitions used in this paper, and results about disjoint $2$-frames in a root lattice. 

A square $(\pm1,0)-$matrix $W$ of order $d$ is a {\it weighing matrix of weight $k$} if $W W^{T}=k I$ holds.
A weighing matrix of order $d$ and weight $d$ is known as a Hadamard matrix and that of order $d$ and  weight $d-1$ is known as a conference matrix.

Two weighing matrices $W_1,W_2$ of order $d$ and weight $k$ are said to be {\it unbiased} if $(1/\sqrt{k})W_1 W_2^{T}$ is also 
weighing matrix of weight $k$ \cite{HKO}.
A set of weighing matrices $\{W_1,\ldots,W_f\}$ is said to be {\it mutually unbiased weighing matrices} (MUWM)  if any distinct two of them are unbiased.

To construct mutually quasi-unbiased weighing matrices, which are defined later, we prepare two codes over $\mathbb{Z}_2$ or $\mathbb{Z}_4$. 

Let $\mathcal{B}(2,m)$ be a cyclic code of length $2^m-1$ with defining set $C_1\cup C_2\cup C_3\cup C_4$, where $C_i$ is the $2$-cyclotomic coset of $i$ modulo $2^m-1$ for $i=1,2,3,4$.
The code $\mathcal{B}(2,m)$ is called the {\it narrow-sense BCH code} with designed distance $5$.
Then the dual code $\mathcal{B}(2,m)^\perp$ has the weights $\{0,2^{m-1}-2^{(m-1)/2},2^{m-1},2^{m-1}+2^{(m-1)/2}\}$ \cite[Table 11.2]{HV}.
Let $C$ be a code generated by the extended code of $\mathcal{B}(2,m)^\perp$ and the all-ones vector.
Then $C$ contains the first order Reed-Muller code of length $2^m$ as a subcode and the set of its weights is $\{0,2^{m-1}-2^{(m-1)/2},2^{m-1},2^{m-1}+2^{(m-1)/2},2^m\}$.

Let $h(x)$ be a primitive basic irreducible polynomial of degree $m$,  and $g(x)$ be the reciprocal polynomial to $(x^{2^m-1}-1)/((x-1)(h(x)))$.  
Let $\mathcal{K}'(m)$ be a code of length $2^m-1$ over $\mathbb{Z}_4$ with generator polynomial $g(x)$. 
A $\mathbb{Z}_4$-Kerdock code $\mathcal{K}(m)$ is the extended code of length $d=2^m$ obtained by adding an overall parity check to $\mathcal{K}'(m)$.
For $i\in\{0,1,2,3\}$ and $x\in \mathbb{Z}_4^d$, let $n_i(x)$ denote the number of coordinates equal to $i$. 
Then the set of  $(n_0(x)-n_2(x))+\sqrt{-1}(n_2(x)-n_4(x))$  for $x\in \mathcal{K}(m)$ is 
$\{\pm d,\pm\sqrt{-1}d,0,(\pm\sqrt{d}\pm\sqrt{-d})/\sqrt{2}\}$ if $m$ is odd and $\{\pm d,\pm\sqrt{-1}d,0,\pm\sqrt{d},\pm\sqrt{-d}\}$ if $m$ is even, see \cite{HKCSS}, \cite[Section 12]{HV}, \cite{BD} for more details .

An integral lattice is called a {\it root lattice} if it is generated by roots, which are vectors whose norm $\sqrt{2}$. It is well known that the irreducible root lattices are $A_d (d\geq 2)$, $D_d (d\geq 4)$, $E_6$,
$E_7$, and $E_8$ see for example \cite{E}. 
The subset $F=\{\pm v_1,\ldots, \pm v_d\}$ in a lattice of rank $d$ is called a {\it $k$-frame} if 
the usual inner products satisfy $(v_i,v_j)=k\delta_{ij}$, 
where $\delta_{ij}$ is the Kronecker delta. 

In Section~\ref{subsec:frame}, we describe the relationship between MUWM of weight 4 and  disjoint $2$-frames in roots. We prepare several results about disjoint $2$-frames in a root lattice. Let $\perp_i L_i$ denote the orthogonal direct sum of lattices $L_i$.  Let $m(\Lambda)$ be the maximum number of disjoint $2$-frames in  a lattice $\Lambda$.

\begin{lemma} \label{lem:red}
Let $L=\perp_i L_i$, where $L_i$ is an irreducible root lattice.  Then we have
\[
m(L) = {\rm min}_i \{ m(L_i)\}.
\]
\end{lemma}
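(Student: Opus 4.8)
The plan is to reduce the global frame structure to the individual summands by exploiting the rigidity of roots in an orthogonal direct sum. First I would observe that because each $L_i$, being a root lattice, is an even lattice, any root $v\in L$ (a vector with $(v,v)=2$) must lie entirely in a single summand: writing $v=\sum_i v_i$ with $v_i\in L_i$, evenness of each $L_i$ forces $(v_i,v_i)$ to be a nonnegative even integer, and $\sum_i (v_i,v_i)=2$ then leaves no option but exactly one $(v_i,v_i)=2$ and the rest zero. Hence the set of roots of $L$ is the disjoint union of the sets of roots of the $L_i$, and roots from different summands are automatically orthogonal.

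Next I would use this to show that a $2$-frame of $L$ is precisely a disjoint union of $2$-frames of the summands. A $2$-frame $F$ consists of $d=\sum_i d_i$ mutually orthogonal roots (up to sign), where $d_i=\operatorname{rank} L_i$. Each such root lies in a unique $L_i$ by the previous step, and the roots landing in $L_i$ are mutually orthogonal, hence number at most $d_i$. Summing over $i$ and using that the total is $d$ forces each summand to receive exactly $d_i$ of them, i.e.\ a full $2$-frame $F_i$ of $L_i$. This yields a bijection between $2$-frames of $L$ and choices of one $2$-frame $F_i$ in each summand $L_i$. Crucially, since the roots of $L$ are partitioned by the summands, two $2$-frames $F=\bigsqcup_i F_i$ and $F'=\bigsqcup_i F_i'$ are disjoint if and only if $F_i$ and $F_i'$ are disjoint in every $L_i$.

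With this dictionary the two inequalities are immediate. For the upper bound, given $N$ pairwise disjoint $2$-frames of $L$, restricting to a fixed summand $L_i$ yields $N$ pairwise disjoint $2$-frames of $L_i$, so $N\le m(L_i)$ for every $i$, and thus $m(L)\le \min_i m(L_i)$. For the lower bound, set $M=\min_i m(L_i)$ and choose, in each $L_i$, a family of $M$ pairwise disjoint $2$-frames $G_i^{(1)},\dots,G_i^{(M)}$ (possible precisely because $m(L_i)\ge M$); then $F^{(t)}=\bigsqcup_i G_i^{(t)}$ for $t=1,\dots,M$ are $M$ pairwise disjoint $2$-frames of $L$, giving $m(L)\ge M$. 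Combining the two bounds yields the claim.

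The only genuinely delicate point is the first step---that a root cannot straddle two summands---which is what makes the frame structure rigid; everything after that is bookkeeping about disjointness. The appearance of $\min$ (rather than, say, a sum) comes from the requirement that a single global frame must simultaneously draw a full frame from every summand, so the number of disjoint global frames is throttled by the scarcest summand.
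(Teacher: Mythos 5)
Your proof is correct and follows essentially the same route as the paper's: both hinge on the observation that a root of $L$ cannot straddle two summands (you derive this from evenness of root lattices, the paper from the fact that nonzero vectors in each $L_i$ have norm at least $2$ --- the same rigidity), and then obtain the upper bound by restricting disjoint frames to each summand and the lower bound by combining frames chosen in each summand. Your write-up is simply a more detailed version of the paper's argument, making explicit the bijection between $2$-frames of $L$ and tuples of $2$-frames of the $L_i$.
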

\begin{proof}
For each $v \in L$, we have 
\begin{equation}
(v,v)= (\sum_i  v_i, \sum_i v_i) = \sum_i(v_i,v_i ),  \label{eq:(v,v)}
\end{equation}
where $v_i \in L_i$. When $v$ is a root in $L$, 
it clearly holds that there exists a root $v_j \in L_j$ such that 
$v=v_j$ by \eqref{eq:(v,v)} and $(v_i,v_i)\geq 2$ for each $i$.   
Therefore the number of disjoint $2$-frames
of a reducible lattice is bounded above by that of each irreducible components. Moreover each $L_j$ has the number ${\rm min}_i \{ m(L_i)\}$ of disjoint $2$-frames, and hence we can construct the number ${\rm min}_i \{ m(L_i)\}$ of disjoint $2$-frames in $L$. 
\end{proof}
We use the notation 
\[
(a_1, \ldots, a_d)^P=\{(a_{\sigma(1)}, \ldots, a_{\sigma(n)} ) \mid 
\sigma \in S_d \},  
\]
for $(a_1, \ldots, a_d) \in \mathbb{R}^d$, where $S_d$ is the symmetric group of degree $d$.  
By Lemma~\ref{lem:red}, the maximum number of disjoint 2-frames in an irreducible root lattice is essential. The following lemma shows the number.  
\begin{lemma} \label{lem:maxframes}
We have the following. 
\begin{enumerate}
\item For any $d\geq 2$, $m(A_d)=0$.
\item For even $d\geq 4$,  $m(D_d)= d-1$. 
\item For odd $d\geq 5$,  $m(D_d)=0$.
\item $m(E_6)=0$.
\item $m(E_7)=9$.
\item $m(E_8)=15$.
\end{enumerate}
\end{lemma}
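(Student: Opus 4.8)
The plan is to handle all six cases through two ingredients: a counting upper bound reinforced by an obstruction that rules out even a single frame, and explicit partitions of the root set that meet the bound. The starting observation is that a $2$-frame in a rank-$d$ lattice is, up to sign, nothing but a set of $d$ pairwise orthogonal roots (each $v_i$ has $(v_i,v_i)=2$, hence is a root), and that disjoint frames consume disjoint blocks of $2d$ roots. Writing $N(\Lambda)$ for the number of roots, this gives $m(\Lambda)\le N(\Lambda)/(2d)$ in every case; recording $N(A_d)=d(d+1)$, $N(D_d)=2d(d-1)$, $N(E_6)=72$, $N(E_7)=126$, $N(E_8)=240$ already yields $m(D_d)\le d-1$, $m(E_7)\le 9$, and $m(E_8)\le 15$. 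For the vanishing cases I will use the sharper fact that a $2$-frame requires $d$ pairwise orthogonal roots, and show that the maximal number of pairwise orthogonal roots is strictly below $d$.

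For the zero cases I argue combinatorially. In $A_d$ the orthogonal roots $e_i-e_j$ correspond to a matching on $d+1$ points, so at most $\lfloor (d+1)/2\rfloor<d$ of them are mutually orthogonal. In $D_d$ pairwise orthogonal roots must occupy pairwise-disjoint coordinate pairs with at most two roots per pair, giving at most $2\lfloor d/2\rfloor$; for odd $d$ this is $d-1<d$. Both conclude $m=0$. For $E_6$ I will invoke (and double-check through its maximal $A_5A_1$ and $D_5$ subsystems, each of which supports only $4$ pairwise orthogonal roots) that the largest set of pairwise orthogonal roots in $E_6$ has size $4<6$, so $m(E_6)=0$.

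The positive cases require constructions. For even $D_d$ I take a $1$-factorization of the complete graph $K_d$ into $d-1$ perfect matchings $M_1,\dots,M_{d-1}$, which exists precisely because $d$ is even, and attach to each $M_t$ the frame $\{\pm(e_i+e_j),\pm(e_i-e_j):\{i,j\}\in M_t\}$; since every pair $\{i,j\}$ lies in exactly one matching, these $d-1$ disjoint frames exhaust all $2d(d-1)$ roots, so $m(D_d)=d-1$. For $E_8$ I split the $240$ roots into the $112$ integral roots $\pm e_i\pm e_j$ and the $128$ half-integral roots $\tfrac12(\pm1)^8$ with an even number of minus signs. The integral part is a $D_8$, partitioned into $7$ frames by the construction above. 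For the half-integral part I identify $\tfrac12\varepsilon$ with an even-weight word of $\mathbb{F}_2^8$, so that orthogonality translates to Hamming distance $4$ and negation to adding the all-ones word; the $8$ cosets of the first-order Reed--Muller code $RM(1,3)$ (weights $\{0,4,8\}$, containing the all-ones word) then partition the even-weight code, and each coset is exactly one $2$-frame. Combining, $7+8=15$ gives $m(E_8)=15$.

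The case $m(E_7)=9$ is where I expect the real work. I will realize $E_7$ as the roots of $E_8$ orthogonal to a fixed root, equivalently in coordinates as the union of a $D_6$ subsystem, the pair $\pm(e_7+e_8)$, and the $64$ spinor roots. The obstruction to an easy argument is that no spinor root is orthogonal to $e_7+e_8$, so the tempting recipe ``a $D_6$-frame together with the orthogonal direction $e_7+e_8$'' produces only a single frame; the remaining eight frames must genuinely mix $D_6$ and spinor roots and cannot simply be read off from an $E_8$-decomposition. I plan to produce these nine disjoint, pairwise-orthogonal blocks using the $\mathbb{Z}_4$-Kerdock code $\mathcal{K}(3)$ of length $8$ prepared in the preliminaries (the odd case $m=3$), whose inner-product spectrum matches the orthogonality pattern required on the spinor part, or alternatively by an explicit combinatorial partition; verifying disjointness and orthogonality of the resulting nine blocks is the technical heart of the lemma. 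Finally, Lemma~\ref{lem:red} guarantees that these irreducible computations determine $m$ for every root lattice.
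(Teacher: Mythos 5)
Your cases (1)--(3) are correct and essentially identical to the paper's argument (matchings for $A_d$, $1$-factorization of $K_d$ for even $D_d$, the parity obstruction for odd $D_d$), and your case (6) is a correct, self-contained construction that is genuinely different from the paper: the paper obtains the $15$ frames in $E_8$ by citing Best--Kharaghani--Ramp's $14$ MUWM of order $8$ and weight $4$ and adjoining $(\pm 2,0,\ldots,0)^P$, whereas your split into a $D_8$ part ($7$ frames from a $1$-factorization) plus the $128$ spinor roots partitioned by the $8$ cosets of $RM(1,3)$ inside the even-weight code is explicit and arguably more illuminating. However, case (5) is a genuine gap: what you offer for $m(E_7)=9$ is a plan, not a proof. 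You correctly identify the structure (only one frame can contain $\pm(e_7+e_8)$, so the remaining eight frames must mix $D_6$ roots with spinor roots), but you never construct those eight mixed frames; you explicitly defer ``verifying disjointness and orthogonality of the resulting nine blocks'' and only gesture at the $\mathbb{Z}_4$-Kerdock code $\mathcal{K}(3)$ as a possible tool, without showing that its structure produces blocks of the required shape (each of the eight frames must contain both spinor pairs and $D_6$ pairs, so no coset construction on the spinor part alone can suffice). By your own admission this is ``the technical heart of the lemma,'' and it is missing. For comparison, the paper closes this case the same way it closes $E_8$: it cites the existence of $8$ MUWM of order $7$ and weight $4$ from Best--Kharaghani--Ramp, adjoins $(\pm 2,0,\ldots,0)^P$, and observes that the resulting $126$ roots must be the $E_7$ root system decomposed into $9$ disjoint $2$-frames; importing that cited construction is exactly the ingredient your argument lacks.

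A secondary, much smaller issue is case (4): you ``invoke'' that the maximum number of pairwise orthogonal roots in $E_6$ is $4$, checked against the subsystems $A_5 A_1$ and $D_5$. To make this rigorous you need that every orthogonal set of roots (a $kA_1$ subsystem) lies in a maximal closed subsystem and you need the full list of those (your list omits $A_2^3$, which is harmless since it supports only $3$ orthogonal roots, but the classification itself, e.g.\ via Borel--de Siebenthal, is being used silently). The paper is no more self-contained here---it appeals to an exhaustive computer search---so this is a matter of polish; the $E_7$ case is the real defect.
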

\begin{proof} 
(1)  The set of roots in the $A_d$ lattice can be expressed by  $(1,-1,0,\ldots,0)^P$ which has size $d(d+1)$.
Clearly the largest number of mutually orthogonal vectors is 
$\lfloor (d+1)/2 \rfloor$.   Therefore $(1)$ follows.

(2)  The set of roots in the $D_d$ lattice can be expressed by $(\pm 1, \pm 1, 0, \ldots, 0)^P$.  
Disjoint 2-frames in the $D_d$ lattice are related to disjoint perfect matchings of the complete graph $K_d$. 
A matching is a set of pairwise non-adjacent edges. 
A matching is said to be perfect if every vertex is an endpoint of some edge in the matching.  
For a perfect matching $M$ of $K_d$, we can obtain a 2-frame   
\[
F_M=\{ (v_1,\ldots,v_d) \in D_d \mid \sum_k v_k^2= 2,  v_i= \pm 1, v_j=\pm 1, \{i,j\} \in M  \}. 
\] 
Since $d$ is even, the complete graph $K_{d}$ is $1$-factorable, that is, 
the edge set $E(K_d)$ can be decomposed into $d-1$ disjoint perfect matchings \cite[Theorem 9.1]{H}. This implies that 
a set of roots in the $D_d$ lattice is decomposed into $d-1$ disjoint $2$-frames. Therefore (2) follows.

(3) For odd $d$,  the largest number of mutually orthogonal roots in the $D_d$ lattice is clearly $d-1$. Therefore (3) follows.

(4) We can show (4) by a exhaustive computer search.

(5) There exists  a set of $8$ MUWM of weight $4$ and order $7$ \cite{BKR}. Adding $(\pm 2,0,\ldots,0)^P$ to a set of row vectors in them, we can construct $126$ roots that must come from the roots of $E_7$. 
This implies (5).

(6) There exists a set of $14$ MUWM of weight $4$ and order $8$ \cite{BKR}. Adding $(\pm 2,0,\ldots,0)^P$ to a set of row vectors in them, we can construct $240$ roots that must come from the roots of $E_8$. 
This implies (6). 
\end{proof}

\section{Mutually quasi-unbiased weighing matrices} \label{sec:3}
In this section we introduce the concept of mutually quasi-unbiased weighing matrices (MQUWM) with connection to MUWM. 
We also show that the existence of MQUWM is equivalent to 
that of some spherical finite set. 

Two weighing matrices $W_1,W_2$ of order $d$ and weight $k$ are said to be  
{\it quasi-unbiased for parameters $(d,k,l,a)$} if there exist positive integers $a,l$ such that $(1/\sqrt{a}) W_1 W_2^T$ is a weighing matrix of weight $l$. 
 Since $(1/\sqrt{a}) W_1 W_2^T$ is a weighing matrix of weight $l$ for quasi-unbiased weighing matrices $W_1,W_2$, it holds that $l=k^2/a$. 
Weighing matrices $W_1,\ldots,W_f$ are said to be {\it mutually quasi-unbiased weighing matrices (MQUWM) for parameters $(d,k,l,a)$}  if any distinct two of them are quasi-unbiased  for parameters $(d,k,l,a)$.

A set of MQUWM for parameters $(d,d,d,d)$ coincides with a set of  mutually unbiased bases (MUB) in $\mathbb{R}^d$, and that for parameters $(d,k,k,k)$ coincides with a set of MUWM of order $d$ and weight $k$. 
Thus the concept of MQUWM is a simultaneous generalization of both MUB and MUWM. 

The following theorem shows that a set of MQUWM is an important concept related to the existence of MUWM. 

\begin{theorem}\label{thm:mquwm}
Suppose $\{W_1,\ldots, W_f\}$ is a set of mutually quasi-unbiased weighing matrices for parameters $(d,k,l,a)$.  
Then $\{(1/\sqrt{a})W_2 W_1^T,\ldots, (1/\sqrt{a})W_f W_1^T \}$ is a set of mutually unbiased weighing matrices of weight $l$.
\end{theorem}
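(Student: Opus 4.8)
The plan is to set $V_i := (1/\sqrt{a})\,W_i W_1^T$ for $i=2,\dots,f$ and to verify two things: that each $V_i$ is itself a weighing matrix of weight $l$, and that any two distinct members $V_i,V_j$ are unbiased, i.e. $(1/\sqrt{l})\,V_iV_j^T$ is again a weighing matrix of weight $l$. The first point is immediate from the hypothesis: since $W_i$ and $W_1$ are quasi-unbiased for parameters $(d,k,l,a)$, the matrix $(1/\sqrt{a})W_iW_1^T=V_i$ is by definition a weighing matrix of weight $l$.

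The crux of the argument, and the one step I expect to require care, is the observation that a \emph{square} weighing matrix is a scalar multiple of an orthogonal matrix. From $W_1W_1^T=kI$ together with the fact that $W_1$ is square, one also gets $W_1^TW_1=kI$; this is precisely what allows the central factor $W_1^TW_1$ to collapse in the product $V_iV_j^T$.

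With this in hand I would compute directly
\[
V_iV_j^T=\frac{1}{a}\,W_iW_1^TW_1W_j^T=\frac{k}{a}\,W_iW_j^T .
\]
Invoking the quasi-unbiased hypothesis on the pair $W_i,W_j$, the matrix $(1/\sqrt{a})W_iW_j^T$ is a weighing matrix of weight $l$. Substituting this and using the relation $l=k^2/a$ (equivalently $\sqrt{al}=k$) noted just before the theorem, I obtain
\[
\frac{1}{\sqrt{l}}\,V_iV_j^T=\frac{k}{a\sqrt{l}}\,W_iW_j^T=\frac{1}{\sqrt{a}}\,W_iW_j^T ,
\]
which is a weighing matrix of weight $l$. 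Hence $V_i$ and $V_j$ are unbiased, and $\{V_2,\dots,V_f\}$ is a set of MUWM of weight $l$, as claimed.

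I anticipate no serious obstacle: the whole proof reduces to this short direct computation, and the only point that must be handled carefully is the identity $W_1^TW_1=kI$, which the definition of a weighing matrix does not assert outright but which follows from squareness and is exactly what produces the clean cancellation by the factor $k/a$.
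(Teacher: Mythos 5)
Your proposal is correct and follows exactly the route the paper intends: the paper's own proof is just the one line ``We can prove the theorem by a direct calculation,'' and your argument is that calculation carried out in full, including the one genuinely necessary observation that $W_1^TW_1=kI$ follows from $W_1W_1^T=kI$ and squareness, which makes the middle factor collapse and gives $(1/\sqrt{l})V_iV_j^T=(1/\sqrt{a})W_iW_j^T$. Nothing is missing; your write-up simply supplies the details the paper omits.
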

\begin{proof}
We can prove the theorem by a direct calculation.
\end{proof}
\begin{corollary}\label{cor:muwm}
If $\{W_1,\ldots, W_f\}$ is a set of mutually unbiased weighing matrices of weight $k$, 
then  $\{W_1^T, (1/\sqrt{k})W_2 W_1^T,\ldots, (1/\sqrt{k})W_f W_1^T \}$ is a set of mutually unbiased weighing matrices of weight $k$.
\end{corollary}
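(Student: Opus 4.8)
The plan is to derive this from Theorem~\ref{thm:mquwm} together with a direct verification of the single extra matrix $W_1^T$. Since a set of MUWM of weight $k$ is precisely a set of MQUWM for the parameters $(d,k,k,k)$, as noted just before the theorem, applying Theorem~\ref{thm:mquwm} with $a=l=k$ immediately shows that $\{(1/\sqrt{k})W_2 W_1^T,\ldots,(1/\sqrt{k})W_f W_1^T\}$ is a set of MUWM of weight $k$. It therefore remains only to check that $W_1^T$ is itself a weighing matrix of weight $k$ and that it is unbiased with each of the matrices $(1/\sqrt{k})W_j W_1^T$ for $j\geq 2$.

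First I would record the elementary fact that $W_1^T W_1 = kI$. Because $W_1$ is square and $W_1 W_1^T = kI$, the matrix $(1/\sqrt{k})W_1$ is orthogonal, so its transpose is also a left inverse; hence $W_1^T W_1 = kI$, and since $W_1^T$ is again a $(\pm1,0)$-matrix, $W_1^T$ is a weighing matrix of weight $k$. The same argument applied to each $W_j$ shows that every $W_j^T$ is a weighing matrix of weight $k$, a fact I will use below.

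For the unbiasedness of $W_1^T$ with $(1/\sqrt{k})W_j W_1^T$, I would compute the defining product directly:
\begin{equation*}
\frac{1}{\sqrt{k}}\, W_1^T \left( \frac{1}{\sqrt{k}}\, W_j W_1^T \right)^T = \frac{1}{k}\, W_1^T W_1 W_j^T = W_j^T,
\end{equation*}
using $W_1^T W_1 = kI$. Since $W_j^T$ is a weighing matrix of weight $k$, this gives exactly the required unbiasedness. One should also observe that the unbiasedness relation is symmetric: if $M M^T = kI$ then $M^T M = kI$ for square $M$, so $M^T$ is a weighing matrix whenever $M$ is, and the order of the two matrices in the defining product is immaterial.

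I expect the only subtlety to be the repeated passage from $M M^T = kI$ to $M^T M = kI$ for square $M$, which is needed both to see that $W_1^T$ is a weighing matrix and to simplify the product above; everything else is a routine substitution. This step is standard linear algebra but worth stating explicitly, since the definition of a weighing matrix is phrased only in terms of $W W^T$. Once it is in place, combining Theorem~\ref{thm:mquwm} with the two direct checks completes the proof.
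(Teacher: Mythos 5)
Your proposal is correct and follows exactly the route the paper intends: the paper states the corollary as an immediate consequence of Theorem~\ref{thm:mquwm} applied with parameters $(d,k,k,k)$, leaving the verification for the extra matrix $W_1^T$ implicit, and your computation $(1/k)W_1^T W_1 W_j^T = W_j^T$ together with the observation that $W^TW = kI$ for a square weighing matrix is precisely the "direct calculation" being suppressed. No gaps; the write-up just makes explicit what the paper leaves to the reader.
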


If there exist MQUWM, then we obtain MUWM. Note that we have 
MUWM which cannot be obtained from MQUWM which is not MUWM. The following is  a such example. 
\begin{example}\label{ex:1}
There exists a set of $8$ MUWM of weight $4$ and order $7$ \cite{BKR}. They do not come from MQUWM in the sense of Theorem~\ref{thm:mquwm}.  
Since $k \leq 7$ holds, we have 
$4a=k^2\leq 49$, and $a=1,4,9$. For $(k,a)=(2,1)$, there
does not exist a set of $9$ MQUWM of weight $2$ by Theorem~\ref{thm:bound2}.
For $(k,a)=(4,4)$, this case corresponds to Corollary~\ref{cor:muwm}.
For $(k,a)=(6,9)$, there does not exit a weighing matrix of 
weight $6$ and order $7$, because if the order is odd, then the weight must be square \cite{CRS86}. 
Therefore we do not have corresponding MQUWM. 
\end{example}

Let $r S^{d-1}$ denote the unit sphere in $\mathbb{R}^d$ whose radius is $r$.
For a finite set $X$ of $rS^{d-1}$, let $A(X)$ be the set of 
usual inner products of two distinct vectors in $X$.
We say $\{X_0,X_1,\ldots,X_f\}$ is  a {\it cross polytope decomposition} of $X$ if elements of $X_i$ consist of vectors of  a cross polytope for each $i\in\{0,1,\ldots,f\}$ and $\{X_0,X_1,\ldots,X_f\}$ is a partition of $X$.
Let $\Omega_{d}=\{0,\pm1\}^d$ and $\Omega_{d,k}=\{x\in \Omega_d\mid \sum_i x_i^2=k\}$.
For a matrix $A$, denote by $S(A)$ the set of row vectors of $A$.
 
The following proposition characterizes the existence of MQUWM in terms of that of  certain spherical codes. 
\begin{proposition}\label{prop:MQUWM}
Let $f$, $d$, $k$, $a$ be positive integers such that $f\geq2$. 
The existence of the following are equivalent.
\begin{enumerate}
\item a set $\{W_1,\ldots,W_f\}$ of mutually quasi-unbiased weighing matrices for parameters $(d,k,k^2/a,a)$,  
\item a nonempty subset $X\subset  \Omega_{d,k}$ with the property that $A(X)=\{\pm \sqrt{a} ,0,-k\}$ and there exists a cross polytope decomposition $\{X_1,\ldots,X_f\}$ of $X$.
\end{enumerate}
\end{proposition}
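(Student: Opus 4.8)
The plan is to prove the equivalence by setting up an explicit dictionary between a weighing matrix $W_i$ of weight $k$ and a $d$-dimensional cross polytope $X_i$, namely $X_i = S(W_i)\cup(-S(W_i))$, and then reading off the defining property of MQUWM as a condition on the inner product set $A(X)$ of $X=\bigcup_{i}X_i$. The two ingredients I will use repeatedly are: first, that the rows of a weighing matrix of weight $k$ are $d$ mutually orthogonal vectors of squared norm $k$, so that $S(W_i)\cup(-S(W_i))$ is exactly the vertex set $\{\pm u_1,\dots,\pm u_d\}$ of a $d$-dimensional cross polytope inscribed in the sphere of radius $\sqrt{k}$; and second, that a \emph{square} weighing matrix satisfies not only $WW^T=kI$ but also $W^TW=kI$, because $W/\sqrt{k}$ is orthogonal.

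For the implication $(1)\Rightarrow(2)$, I would put $X_i=S(W_i)\cup(-S(W_i))$ and $X=\bigcup_{i=1}^{f}X_i$. Since negating a vector preserves $\sum_i x_i^2$, we have $X\subset\Omega_{d,k}$, and each $X_i$ is a cross polytope, so $\{X_1,\dots,X_f\}$ is a cross polytope decomposition once disjointness is checked. The inner products split into two cases: within a single $X_i$ distinct vectors are either orthogonal (distinct rows, inner product $0$) or antipodal (inner product $-k$); between $X_i$ and $X_j$ with $i\neq j$, every inner product is $\pm$ an entry of $W_iW_j^T$, hence lies in $\{0,\pm\sqrt{a}\}$ because $(1/\sqrt{a})W_iW_j^T$ is a $(\pm1,0)$-matrix. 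Thus $A(X)\subseteq\{\pm\sqrt{a},0,-k\}$, and equality holds because $-k$ comes from antipodal pairs, $0$ from orthogonal rows, and $\pm\sqrt{a}$ must occur since $(1/\sqrt{a})W_iW_j^T$ is a weighing matrix and hence not the zero matrix.

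For the converse $(2)\Rightarrow(1)$, from each cross polytope $X_i=\{\pm u_1^{(i)},\dots,\pm u_d^{(i)}\}$ I would choose one vector from each antipodal pair and let $W_i$ be the matrix with these rows; the sign choices only change $W_i$ by row negations and do not affect the conclusion. Orthogonality and common squared norm $k$ give $W_iW_i^T=kI$, so each $W_i$ is a weighing matrix of weight $k$. For $i\neq j$ the entries of $W_iW_j^T$ are inner products between $X_i$ and $X_j$; these cannot equal $-k$, for that would require an antipodal pair $u,-u$ with $u\in X_i$ and $-u\in X_j$, whence $-u\in X_i\cap X_j$ against disjointness, so every entry lies in $\{0,\pm\sqrt{a}\}$ and $(1/\sqrt{a})W_iW_j^T$ is a $(\pm1,0)$-matrix. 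The remaining weighing condition is the direct computation
\[
\frac{1}{\sqrt{a}}W_iW_j^T\Bigl(\frac{1}{\sqrt{a}}W_iW_j^T\Bigr)^T=\frac{1}{a}W_i\bigl(W_j^TW_j\bigr)W_i^T=\frac{k}{a}W_iW_i^T=\frac{k^2}{a}I,
\]
which uses $W_j^TW_j=kI$; hence $(1/\sqrt{a})W_iW_j^T$ is a weighing matrix of weight $k^2/a$ and the $W_i$ are MQUWM for $(d,k,k^2/a,a)$.

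The step I expect to be the main obstacle is bookkeeping the value $-k$ correctly: I must ensure that the inner product $-k$ arises only inside a single cross polytope (from true antipodes) and never between two different blocks, since otherwise the off-diagonal matrix $(1/\sqrt{a})W_iW_j^T$ would acquire an entry $\pm k/\sqrt{a}=\pm\sqrt{k^2/a}$ that is not in $\{0,\pm1\}$ unless $k^2/a=1$. This is exactly what forces the blocks $X_i$ to be pairwise disjoint in the forward direction and what lets me rule out cross-block antipodal pairs in the converse; I would also state the non-degeneracy $a<k^2$ (equivalently $l\geq2$) explicitly, the case $a=k^2$ being the degenerate situation where $\pm\sqrt{a}$ and $-k$ coincide.
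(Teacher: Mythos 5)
Your proposal is correct and takes essentially the same route as the paper: the paper's own (very terse) proof uses exactly your dictionary $X_i=S(W_i)\cup S(-W_i)$ for $(1)\Rightarrow(2)$ and reassembles matrices from the antipodal pairs of each cross polytope for $(2)\Rightarrow(1)$. The extra details you supply---the computation $\frac{1}{a}W_iW_j^T\bigl(W_iW_j^T\bigr)^T=\frac{k}{a}W_iW_i^T=\frac{k^2}{a}I$ using $W_j^TW_j=kI$, and the bookkeeping of disjointness and of the degenerate case $a=k^2$---are precisely the verifications the paper leaves implicit, so they strengthen rather than change the argument.
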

\begin{proof}
(1)$\Rightarrow$(2): 
Let  $X_i=S(W_i)\cup S(-W_i)$ for $i=1,\ldots,f$. 
Then $X=\cup_{i=1}^f X_i$ with $\{X_1,\ldots,X_f\}$ satisfies (2).

(2)$\Rightarrow$(1): 
For each $i \in \{1,\ldots, f\}$,  any vector in $X_i$ has 
$k$ of entries $\pm 1$, and remaining entries
 are $0$ because $X$ is in $\Omega_{d,k}$.

For each $i \in \{1,\ldots, f\}$, we define the matrix $W_i=[v_1,\ldots, v_d]$ by the vectors $X_i=\{\pm v_1,\ldots, \pm v_d\}$. 
Since $A(X)=\{\pm \sqrt{a}, 0,-k\}$, the entries of $W_i W_j^T$ are $0,\pm \sqrt{a}$ for distinct $i,j$.  
Thus $W_1,\ldots,W_f$ form MQUWM for desired parameters. 
\end{proof}

For MUWM, we obtain the following characterization. 
\begin{proposition}\label{prop:MUWM}
Let $f$, $d$, $k$ be positive integers such that $f\geq2$. 
The existence of the following are equivalent.
\begin{enumerate}
\item a set $\{W_1,\ldots,W_f\}$ of mutually unbiased weighing matrices of wight $k$,  
\item a nonempty subset $X\subset  \sqrt{k}S^{d-1}$ with the property that $A(X)=\{\pm \sqrt{k} ,0,-k\}$ and there exists a cross polytope decomposition $\{X_0,X_1,\ldots,X_f\}$ of $X$.
\end{enumerate}
\end{proposition}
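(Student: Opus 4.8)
The plan is to reduce everything to Proposition~\ref{prop:MQUWM}, exploiting the fact that a set of MUWM of weight $k$ is precisely a set of MQUWM for parameters $(d,k,k,k)$ (take $a=k$, so that $l=k^2/a=k$), together with the observation that one extra cross polytope — the scaled coordinate frame $X_0=\{\pm\sqrt{k}e_1,\ldots,\pm\sqrt{k}e_d\}$ — may always be adjoined. This is what accounts for the decomposition into $f+1$ parts here, versus $f$ parts in Proposition~\ref{prop:MQUWM}, and for the relaxation of the ambient set from $\Omega_{d,k}$ to the full sphere $\sqrt{k}S^{d-1}$.

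For $(1)\Rightarrow(2)$, I would set $X_i=S(W_i)\cup S(-W_i)$ for $i=1,\ldots,f$; each is a cross polytope because the rows of a weighing matrix of weight $k$ form a $k$-frame. Adjoining $X_0$ as above and putting $X=\bigcup_{i=0}^f X_i\subset\sqrt{k}S^{d-1}$, the angle computation is routine: within any $X_i$ one gets only $0$ and $-k$; between $X_i$ and $X_j$ with $i,j\geq1$ the inner products are the entries of $\pm W_iW_j^T=\pm\sqrt{k}\,(\text{weighing matrix})$, hence lie in $\{0,\pm\sqrt{k}\}$; and the inner product of $\sqrt{k}e_j$ with a row $v$ of $\pm W_i$ is $\sqrt{k}v_j\in\{0,\pm\sqrt{k}\}$ since $v$ is a $(\pm1,0)$-vector. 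Thus $A(X)=\{\pm\sqrt{k},0,-k\}$, and the $f+1$ cross polytopes form the required decomposition.

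For $(2)\Rightarrow(1)$, I would first fix one cross polytope, say $X_0=\{\pm u_1,\ldots,\pm u_d\}$ with $(u_i,u_j)=k\delta_{ij}$. The vectors $u_i/\sqrt{k}$ form an orthonormal basis, so the map $u_i\mapsto\sqrt{k}e_i$ extends to an orthogonal transformation; this preserves $\sqrt{k}S^{d-1}$, all inner products, and the decomposition, so I may assume $X_0=\{\pm\sqrt{k}e_1,\ldots,\pm\sqrt{k}e_d\}$. The crucial rigidity step is then: for any $w\in X\setminus X_0$ the inner product $(w,\sqrt{k}e_j)=\sqrt{k}w_j$ lies in $A(X)=\{\pm\sqrt{k},0,-k\}$, and the value $-k$ is impossible since it would force $w=-\sqrt{k}e_j\in X_0$; hence $w_j\in\{0,\pm1\}$ for every $j$. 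Because $w\in\sqrt{k}S^{d-1}$ gives $\sum_j w_j^2=k$, exactly $k$ coordinates are nonzero, so $w\in\Omega_{d,k}$. Thus $X\setminus X_0\subset\Omega_{d,k}$ with cross polytope decomposition $\{X_1,\ldots,X_f\}$ and angle set $\{\pm\sqrt{k},0,-k\}$ (the value $\pm\sqrt{k}$ occurring because $f\geq2$), so Proposition~\ref{prop:MQUWM} applied with $a=k$ yields the desired MUWM.

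I expect the main obstacle to be precisely this rigidity step in $(2)\Rightarrow(1)$: showing that once a single cross polytope is pinned to the coordinate frame, the prescribed angle set $\{\pm\sqrt{k},0,-k\}$ together with the sphere constraint forces every remaining vector to be a genuine $(\pm1,0)$-vector of $\Omega_{d,k}$. This is exactly the point at which the apparent freedom gained by passing from $\Omega_{d,k}$ to $\sqrt{k}S^{d-1}$ is recovered, and it hinges on excluding the inner product value $-k$ via the antipodality argument. The remaining verifications (the angle enumeration in the forward direction and pairwise distinctness of the parts for $k\geq2$) are direct and need no delicate estimates.
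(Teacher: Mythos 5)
Your proof is correct and follows essentially the same route as the paper: the identical forward construction $X_i=S(W_i)\cup S(-W_i)$ together with $X_0=S(\sqrt{k}I)\cup S(-\sqrt{k}I)$, and for the converse an orthogonal transformation pinning $X_0$ to $\{\pm\sqrt{k}e_1,\ldots,\pm\sqrt{k}e_d\}$ followed by reduction to Proposition~\ref{prop:MQUWM} with $a=k$. The only difference is that you spell out the rigidity step (excluding the inner product $-k$ against the frame vectors to force $w\in\Omega_{d,k}$), which the paper merely asserts in one line.
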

\begin{proof}
(1)$\Rightarrow$(2): 
Let  $X_i=S(W_i)\cup S(-W_i)$ for $i=1,\ldots,f$ and $X_0=S(\sqrt{k}I)\cup S(-\sqrt{k}I)$. 
Then $X=\cup_{i=0}^f X_i$ with $\{X_0,\ldots,X_f\}$ satisfies (2).

(2)$\Rightarrow$(1): 
After the transformation $X_0$ to be $\{\pm \sqrt{k}e_1,\ldots,\pm \sqrt{k}e_d\}$ 
 any vector in $X_i$ has 
$k$ of entries $\pm 1$.
The rest of the argument is same as Proposition~\ref{prop:MQUWM}
\end{proof}

\section{MQUWM for parameters $(d,d,d/2,2d)$}\label{sec:code}
In this section we give an upper bound and a maximal example of MQUWM for parameters $(d,d,d/2,2d)$.

Define $\psi:\mathbb{Z}_2^d\rightarrow \{1,-1\}^d$ as a map such that $\psi((x_i)_{i=1}^d)=((-1)^{x_i})_{i=1}^d$
\begin{theorem}\label{thm:bound1}
Let $W=\{W_1,\ldots, W_f\}$ be a set of mutually quasi-unbiased 
weighing matrices for parameters $(d,d,d/2,2d)$. Then we have 
\[
f \leq 
d.
\]
\end{theorem}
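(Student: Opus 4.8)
The plan is to turn the $f$ matrices into a configuration of $fd$ symmetric matrices whose Gram matrix is positive semidefinite of provably small rank, and then extract the bound from a comparison of its first two moments.

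First I would record the local structure. Since $k=d$, each $W_i$ is a Hadamard matrix, so its rows $v_{i,1},\dots,v_{i,d}\in\{\pm1\}^d$ are mutually orthogonal with $\|v_{i,a}\|^2=d$. Because the matrices are quasi-unbiased for parameters $(d,d,d/2,2d)$, for $i\ne j$ the entries of $W_iW_j^T$ lie in $\{0,\pm\sqrt{2d}\}$ and $(1/\sqrt{2d})W_iW_j^T$ has weight $d/2$; hence each row of $W_iW_j^T$ has exactly $d/2$ nonzero entries. Equivalently $\langle v_{i,a},v_{j,b}\rangle^2\in\{0,2d\}$, and for fixed $i\ne j$ and fixed $a$ exactly $d/2$ of the indices $b$ yield $2d$ while the remaining $d/2$ yield $0$. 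This exact split is the combinatorial fact that will make the final inequality collapse.

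The tempting route — passing to the traceless projections and imitating the real-MUB dimension count — does not apply here, and this is the main obstacle: unlike genuine MUB, the cross inner products are not of constant modulus (they are $0$ or $\sqrt{2d}$), so projections coming from different $W_i$ are not mutually orthogonal and a naive count would produce a false, too-strong bound. The observation that repairs the argument is that the $v_{i,a}$ are $\pm1$-vectors, so every rank-one matrix $q_{i,a}:=v_{i,a}v_{i,a}^T$ has the same diagonal $(1,\dots,1)$. Setting $\tilde q_{i,a}:=q_{i,a}-I$, all $fd$ matrices $\tilde q_{i,a}$ are symmetric with zero diagonal, hence lie in the space of hollow symmetric matrices of dimension $\binom d2$. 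Their Gram matrix $G$ for the trace inner product is therefore positive semidefinite of rank at most $d(d-1)/2$, and a direct computation gives $\langle \tilde q_{i,a},\tilde q_{j,b}\rangle=\langle v_{i,a},v_{j,b}\rangle^2-d$, equal to $d(d-1)$ on the diagonal and to $\pm d$ off the diagonal according to the orthogonality pattern above.

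The bound then follows by comparing the first two moments of $G$. Using the counts just recorded I would compute
\[
\operatorname{tr}(G)=fd^2(d-1),\qquad \operatorname{tr}(G^2)=fd^4(d+f-2).
\]
Since $G\succeq0$ has at most $r:=d(d-1)/2$ nonzero eigenvalues, the Cauchy--Schwarz inequality $\operatorname{tr}(G)^2\le r\,\operatorname{tr}(G^2)$ applies; substituting the two traces and cancelling the common factor $fd^4(d-1)$ reduces it to
\[
f(d-2)\le d(d-2),
\]
so $f\le d$ for $d\ge3$, the cases $d\le 2$ being immediate by inspection. Thus the only genuinely nonroutine step is the conceptual one of abandoning the constant-modulus (MUB) heuristic and instead exploiting the all-ones-diagonal structure to land in the hollow-symmetric space; the moment calculation afterward is mechanical, and it is precisely the weight-$d/2$ hypothesis that forces the final inequality to degenerate exactly to $f\le d$.
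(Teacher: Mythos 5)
Your argument is correct for $d\ge 3$ and takes a genuinely different route from the paper's. The paper pulls the $2fd$ row vectors back to a binary code $\mathcal{C}\subset\mathbb{Z}_2^d$ whose nonzero distances are $(d\pm\sqrt{2d})/2$, $d/2$, $d$, writes down the annihilator polynomial of $\mathcal{C}$, expands it in Krawtchouk polynomials, and invokes Delsarte's linear programming bound to get $f\le d$. You instead work on the Euclidean side: mapping each row $v$ to the hollow symmetric matrix $vv^T-I$ places all $fd$ of them in a space of dimension $\binom{d}{2}$, and your Gram matrix computation --- diagonal entries $d(d-1)$, all off-diagonal entries $\pm d$ --- combined with $(\operatorname{tr}G)^2\le\operatorname{rank}(G)\operatorname{tr}(G^2)$ gives $f(d-2)\le d(d-2)$. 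I checked both moments ($\operatorname{tr}G=fd^2(d-1)$, $\operatorname{tr}(G^2)=fd^4(d+f-2)$) and the algebra; they are right. One small correction of emphasis: the exact $d/2$--$d/2$ split of the cross inner products is never used, since $\operatorname{tr}(G^2)$ only sees the squares of the off-diagonal entries; what matters is that the hypothesis $a=2d$ makes the two possible cross values $2d-d=+d$ and $0-d=-d$ have equal magnitude, which is what makes the bound degenerate exactly to $f\le d$. Your argument is more elementary and self-contained (no coding theory, no Krawtchouk expansion --- essentially the classical rank-plus-second-moment trick from equiangular lines and real MUBs), while the paper's LP formulation has the advantage of living in the same coding-theoretic framework that produces the matching constructions in Section~\ref{sec:code}.

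One caveat: your closing claim that the cases $d\le 2$ are ``immediate by inspection'' is not right, because at $d=2$ the statement as written is actually \emph{false}: the three order-$2$ Hadamard matrices with row pairs $\{(1,1),(1,-1)\}$, $\{(1,1),(-1,1)\}$, $\{(-1,-1),(1,-1)\}$ are pairwise quasi-unbiased for parameters $(2,2,1,4)$, giving $f=3>d$. This degeneracy (it occurs because $\sqrt{2d}=d$ when $d=2$, so rows of distinct matrices may coincide up to sign) breaks the paper's own proof as well --- its Krawtchouk coefficients $6/(d(d-2))$ and the factor $1-2z/(d-\sqrt{2d})$ are undefined at $d=2$ --- so both arguments, and the theorem itself, implicitly assume $d>2$. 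This is a defect of the statement rather than of your proof, but you should state the restriction $d\ge 3$ explicitly instead of waving at small cases.
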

\begin{proof}
Let $\mathcal{C}$ be the set of preimage of $\psi$ for all the elements of $S(W_i)\cup S(-W_i)$ ($1\leq i \leq f$).
Letting $\alpha(z)=2f d(1-\frac{2z}{d})(1-\frac{z}{d})(1-\frac{2z}{d+\sqrt{2d}})(1-\frac{2z}{d-\sqrt{2d}})$ be the annihilator polynomial of $\mathcal{C}$ and $K_k(z)$ the Krawtchouk polynomial of degree $k$, we have the following expansion:
\begin{align*}
\alpha(z)=f(\frac{1}{d}K_0(z)+\frac{1}{d}K_1(z)+\frac{8}{d^2}K_2(z)+\frac{6}{d(d-2)}K_3(z)+\frac{6}{d^2(d-2)}K_4(z)).
\end{align*}
Thus the linear programming bound \cite[Theorem~5.23]{D} shows that $f\leq d$.
\end{proof}

We use the linear codes over $\mathbb{Z}_2$ or $\mathbb{Z}_4$ to obtain MQUWM.
First we use the linear code over $\mathbb{Z}_2$ which contains the first order Reed-Muller code $RM(1,m)$ to obtain MQUWM, see \cite{HV} for the Reed-Muller code. 
\begin{lemma}\label{lem:code}
Let $C$ be a binary linear code of length $d=2^m$ for a positive integer $m$.
Assume that the set of weights of $C$ is $\{0,d/2\pm a,d/2,d\}$ and $C$ has the first order Reed-Muller code $RM(1,m)$ as a subcode.
Let $\{u_1,\ldots,u_f\}$ be a complete set of representative of $C/RM(1,m)$.
Then $\{\psi(u_1+RM(1,m)),\ldots,\psi(u_f+RM(1,m))\}$ provides a cross polytope decomposition of $\psi(C)$ with the inner product set $\{\pm 2a,0,-d\}$.
\end{lemma}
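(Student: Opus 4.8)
The plan is to exploit the standard dictionary between Hamming weights and Euclidean inner products carried by $\psi$. For any $x,y\in\mathbb{Z}_2^d$,
\[
\langle \psi(x),\psi(y)\rangle=\sum_{i=1}^d(-1)^{x_i+y_i}=d-2\,\mathrm{wt}(x+y),
\]
where $\mathrm{wt}$ denotes the Hamming weight; in particular every vector of $\psi(C)$ lies on $\sqrt{d}\,S^{d-1}$. First I would apply this identity to two distinct codewords $x,y\in C$: since $C$ is linear, $x+y$ is a nonzero codeword, so $\mathrm{wt}(x+y)\in\{d/2-a,\,d/2,\,d/2+a,\,d\}$, and the identity turns these four weights into the inner products $2a,0,-2a,-d$. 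Each of these weights is genuinely attained in $C$ (the weights $d/2\pm a$ by hypothesis, $d/2$ by the affine codewords of $RM(1,m)$, and $d$ by the all-ones vector $\mathbf{1}\in RM(1,m)\subseteq C$), so every one of the values occurs and $A(\psi(C))=\{\pm 2a,0,-d\}$.

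Next I would check that each block $\psi(u_i+RM(1,m))$ is the vertex set of a cross polytope, which is the heart of the statement. Here the crucial input is the weight enumerator of $RM(1,m)$: every codeword has weight $0$, $d/2$, or $d$, and the extreme weights are attained only by $\mathbf{0}$ and $\mathbf{1}$. A coset $u+RM(1,m)$ has exactly $|RM(1,m)|=2d$ elements, matching the number of vertices of a cross polytope in $\mathbb{R}^d$. For $u+c_1,u+c_2$ with $c_1,c_2\in RM(1,m)$ the difference $c_1+c_2$ stays in $RM(1,m)$, so
\[
\langle \psi(u+c_1),\psi(u+c_2)\rangle=d-2\,\mathrm{wt}(c_1+c_2)\in\{d,0,-d\},
\]
the value $d$ occurring exactly when $c_1=c_2$, the value $-d$ exactly when $c_1+c_2=\mathbf{1}$, and $0$ otherwise. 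Hence the $2d$ vectors split into $d$ antipodal pairs $\{\psi(u+c),\psi(u+c+\mathbf{1})\}$ whose representatives are mutually orthogonal, which is precisely the defining configuration of a cross polytope.

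Finally, since $\{u_1,\ldots,u_f\}$ is a complete set of coset representatives, the cosets $u_i+RM(1,m)$ partition $C$, and as $\psi$ is a bijection the blocks $\psi(u_i+RM(1,m))$ partition $\psi(C)$; together with the previous two steps this exhibits the required cross polytope decomposition with inner product set $\{\pm 2a,0,-d\}$. I do not expect a serious obstacle, since the argument reduces to the elementary identity relating Hamming weights and inner products. The one point that genuinely needs the hypotheses is the cross polytope check: it relies on the differences within a single coset never leaving $RM(1,m)$, so that they avoid the weights $d/2\pm a$ and produce only the angles $0$ and $-d$ inside a block; this is exactly where the subcode condition $RM(1,m)\subseteq C$ and the special weight structure $\{0,d/2,d\}$ of $RM(1,m)$ are used.
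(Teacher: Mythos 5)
Your proof is correct and follows essentially the same route as the paper's: the identity $\langle\psi(x),\psi(y)\rangle=d-2\,\mathrm{wt}(x+y)$ converts the weight hypothesis on $C$ into $A(\psi(C))=\{\pm 2a,0,-d\}$, the cosets of $RM(1,m)$ give the partition, and the weight distribution $\{0,d/2,d\}$ of $RM(1,m)$ makes each block a cross polytope. You merely spell out details the paper's terse proof leaves implicit, such as the antipodal pairing via $\mathbf{1}$ and the count of $2d$ vertices per coset.
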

\begin{proof}
Note that, for each codewords $x,y\in \mathbb{Z}_2^d$, the Hamming distance of $x$ and $y$ is $j$ if and only if $\langle \psi(x),\psi(y)\rangle$ is $d-2j$.
By the assumption of weights of $C$, $A(\psi(C))=\{\pm 2a,0,-d\}$. 

Let $D$ be the first order Reed-Muller code $RM(1,m)$. 
Since $\{u_1,\ldots,u_f\}$ is a complete set of representative  of $C/D$, 
$\{\psi(u_i+D)\mid i=1,\ldots,f \}$ provides a partition of $\psi(C)$.
Since  the Reed-Muller code has the weights $\{0,d/2,d\}$, 
each $\frac{1}{\sqrt{d}}\psi(u_i+D)$ for $1\leq i\leq f$ forms a cross polytope and the inner products between vectors in different $\psi(u_i+D)$'s are in $0,\pm 2a$.

Therefore $\{\psi(u_i+D)\mid i=1,\ldots,f \}$ provides a cross polytope partition of $\psi(C)$ with the inner product set $\{\pm 2a,0,-d\}$.
\end{proof}

Next we use the linear code over $\mathbb{Z}_4$ which contains the first order $\mathbb{Z}_4$-Reed-Muller code $ZRM(1,m)$ to obtain MQUWM, see \cite{HV} for the $\mathbb{Z}_4$-Reed-Muller code. 
Define $\phi:\mathbb{Z}^4\rightarrow \mathbb{Z}_2^2$ to be $\phi(0)=(0,0),\phi(1)=(0,1),\phi(2)=(1,1),\phi(3)=(1,0)$.
This map is extended componentwise to a map, also denoted by $\phi$, from $\mathbb{Z}_4^d$ to $\mathbb{Z}_2^{2d}$.
The map  $\phi$ is called the Gray map.
\begin{lemma}\label{lem:z4code}
Let $C$ be a $\mathbb{Z}_4$-linear code of length $d=2^m$ for a positive integer $m\geq 2$.
Assume that the set of $(n_0(x)-n_2(x))+\sqrt{-1}(n_1(x)-n_3(x))$ for $x\in C$ is 
$$\{\pm d, \pm\sqrt{-1}d,0, \pm b,\pm\sqrt{-1}b\}$$ 
for $0<b<d$
and $C$ has the first order $\mathbb{Z}_4$-Reed-Muller code $ZRM(1,m)$ as a subcode.
Let $\{u_1,\ldots,u_f\}$ be a complete set of representative of $C/ZRM(1,m)$.
Then $\{\psi\circ\phi(u_1+ZRM(1,m)),\ldots,\psi\circ\phi(u_f+ZRM(1,m))\}$ provides a cross polytope decomposition of $\psi(C)$.
\end{lemma}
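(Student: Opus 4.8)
The plan is to imitate the proof of Lemma~\ref{lem:code}, trading the Hamming-distance computation for the analogous one attached to the composite map $\psi\circ\phi$. First I would record how $\psi\circ\phi$ behaves on a single $\mathbb{Z}_4$-coordinate: its four values $\psi\phi(0)=(1,1)$, $\psi\phi(1)=(1,-1)$, $\psi\phi(2)=(-1,-1)$, $\psi\phi(3)=(-1,1)$ are the vertices of a square, and one checks directly that for $a,b\in\mathbb{Z}_4$ the inner product $\langle\psi\phi(a),\psi\phi(b)\rangle$ is $2$ when $a=b$, is $0$ when $a-b$ is odd, and is $-2$ when $a-b=2$. Since $\psi\circ\phi$ acts coordinatewise, for $x,y\in\mathbb{Z}_4^d$ with $z=x-y\in C$ this sums to
\[
\langle\psi\circ\phi(x),\psi\circ\phi(y)\rangle=2\bigl(n_0(z)-n_2(z)\bigr),
\]
so in particular every vector of $\psi\circ\phi(C)$ has squared norm $2d$.

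Next I would read off the inner product set from the hypothesis. The real part of $(n_0(z)-n_2(z))+\sqrt{-1}(n_1(z)-n_3(z))$ ranges over $\{0,\pm b,\pm d\}$ as $z$ runs through $C$, so every inner product lies in $\{0,\pm2b,\pm2d\}$. Because $n_0(z)-n_2(z)=d$ forces $n_0(z)=d$, hence $z=0$, and because $\psi\circ\phi$ is injective (it is injective in each coordinate), the value $2d$ is attained only by a vector with itself; dually $n_0(z)-n_2(z)=-d$ forces $z=\mathbf{2}:=(2,\dots,2)$ and produces the antipode. Hence $A(\psi\circ\phi(C))=\{\pm2b,0,-2d\}$, the four-angle antipodal shape sought in Proposition~\ref{prop:MQUWM}.

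It then remains to see that each block $\psi\circ\phi(u_i+ZRM(1,m))$ is a cross polytope, and here the decisive input is the weight behaviour of the first-order $\mathbb{Z}_4$-Reed-Muller code: its Gray image $\phi(ZRM(1,m))$ is the binary code $RM(1,m+1)$ of length $2d$, whose weights are $\{0,d,2d\}$ \cite{HV}. Using the coordinatewise identity $\mathrm{wt}(\phi(z))=d-\bigl(n_0(z)-n_2(z)\bigr)$, this says exactly that every $z\in ZRM(1,m)$ has $n_0(z)-n_2(z)\in\{d,0,-d\}$, with $d$ only for $z=0$ and $-d$ only for $z=\mathbf{2}$. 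In particular $\mathbf{2}\in ZRM(1,m)$, so each coset $u_i+ZRM(1,m)$ is closed under adding $\mathbf{2}$ and $\psi\circ\phi(x+\mathbf{2})=-\psi\circ\phi(x)$ splits its $4d$ images into antipodal pairs, while any two codewords of one coset whose difference is neither $0$ nor $\mathbf{2}$ give inner product $0$. Thus $\psi\circ\phi(u_i+ZRM(1,m))$ is a set of $2d$ mutually orthogonal antipodal pairs, i.e.\ a cross polytope in $\mathbb{R}^{2d}$.

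Finally, as $\{u_1,\dots,u_f\}$ is a complete set of coset representatives the cosets partition $C$, and injectivity of $\psi\circ\phi$ upgrades this to a partition of $\psi\circ\phi(C)$ into cross polytopes, which is the asserted decomposition. I expect the one nontrivial point to be the middle step, namely invoking the Gray-image weight distribution $\phi(ZRM(1,m))=RM(1,m+1)$ and thereby pinning down the admissible values of $n_0-n_2$ on $ZRM(1,m)$; the remaining inner product bookkeeping runs in exact parallel with the $\mathbb{Z}_2$ computation already carried out in Lemma~\ref{lem:code}.
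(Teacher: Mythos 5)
Your proof is correct and follows essentially the same route as the paper's: the coordinatewise identity $\langle\psi\circ\phi(x),\psi\circ\phi(y)\rangle=2(n_0(x-y)-n_2(x-y))$, the resulting inner product set $\{\pm 2b,0,-2d\}$, and the coset partition into cross polytopes. The only difference is that you spell out what the paper leaves implicit, namely that each coset image is a cross polytope because $\phi(ZRM(1,m))=RM(1,m+1)$ has weights $\{0,d,2d\}$, which is exactly the justification the paper's terse assertion relies on.
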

\begin{proof}
Note that, for codewords $x,y\in \mathbb{Z}_4^d$, $x-y$ has $(n_i(x-y))_{i=0}^3$ if and only if $\langle \psi\circ\phi(x),\psi\circ\phi(y)\rangle$ is $2(n_0(x-y)-n_2(x-y))$.
Thus $A(\psi\circ\phi(C))=\{\pm2b,0,-2d\}$ holds.

Let $D$ be the first order $\mathbb{Z}_4$-Reed-Muller code $ZRM(1,m)$.
Since $\{u_1,\ldots,u_f\}$ is a complete set of representative of $C/D$, 
$\{\psi\circ\phi(u_i+D)\mid i=1,\ldots,f \}$ provides a partition of $\psi\circ\phi(C)$.
Since each $\frac{1}{\sqrt{2d}}\psi\circ\phi(u_i+D)$ forms a cross polytope and the inner products between vectors in different $\psi\circ\phi(u_i+D)$'s are in $0,\pm 2b$.

Therefore $\{\psi\circ\phi(u_i+D)\mid i=1,\ldots,f \}$ provides a cross polytope decomposition of $\psi\circ\phi(C)$ with the inner product set $\{\pm2b,0,-2d\}$.
\end{proof}

We construct the maximal example meeting in Theorem~\ref{thm:bound1} for $d=2^{2t+1}$ and $t$ is a positive integer, which is an affirmative answer to Conjecture 23 in \cite{BKR}.
\begin{theorem}
There exists mutually quasi-unbiased weighing matrices for parameters $(d,d,d/2,2d)$ attaining the bound in Theorem~\ref{thm:bound1}, where $d=2^{2t+1}$.
\end{theorem}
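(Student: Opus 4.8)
The plan is to realize the spherical configuration of Proposition~\ref{prop:MQUWM}(2) with $k=d$, $a=2d$, and $f=d$, and to produce it from the binary code $C$ constructed in Section~\ref{sec:2} via Lemma~\ref{lem:code}. Write $m=2t+1$, so that $d=2^m$ and $(m-1)/2=t$ is a positive integer. Let $C$ be the code generated by the extended code of $\mathcal{B}(2,m)^\perp$ together with the all-ones vector. As recorded in Section~\ref{sec:2}, $C$ contains $RM(1,m)$ as a subcode and has weight set $\{0,\,d/2-2^{t},\,d/2,\,d/2+2^{t},\,d\}$, which is exactly the set $\{0,d/2\pm a,d/2,d\}$ demanded by Lemma~\ref{lem:code} with $a=2^{t}$.

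First I would invoke Lemma~\ref{lem:code}. Choosing a complete set of coset representatives $\{u_1,\dots,u_f\}$ of $C/RM(1,m)$, the lemma yields a cross polytope decomposition $\{\psi(u_1+RM(1,m)),\dots,\psi(u_f+RM(1,m))\}$ of $\psi(C)$ with inner product set $\{\pm 2\cdot 2^{t},0,-d\}$. Since $m=2t+1$ gives $\sqrt{2d}=\sqrt{2^{2t+2}}=2^{t+1}=2\cdot 2^{t}$, this set is precisely $\{\pm\sqrt{2d},0,-d\}$. Because $\psi(C)\subset\{\pm1\}^d=\Omega_{d,d}$, the set $X=\psi(C)$ with this decomposition satisfies condition (2) of Proposition~\ref{prop:MQUWM} for $k=d$ and $a=2d$ (note $\sqrt{a}=\sqrt{2d}$). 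Hence the proposition furnishes a set of MQUWM for parameters $(d,d,d^2/(2d),2d)=(d,d,d/2,2d)$, and the number of matrices equals the number of cosets $f=|C|/|RM(1,m)|$.

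It then remains to show $f=d$, which is the crux and reduces to a dimension count. For odd $m\geq 3$ the $2$-cyclotomic cosets satisfy $C_1=C_2=C_4$, while $C_1$ and $C_3$ are disjoint and each of size $m$; hence the primitive narrow-sense BCH code $\mathcal{B}(2,m)$ of designed distance $5$ has dimension $(2^m-1)-2m$, so $\mathcal{B}(2,m)^\perp$ has dimension $2m$. Extending by an overall parity check preserves this dimension, and since the weights of the extended dual BCH code lie in $\{0,d/2-2^t,d/2,d/2+2^t\}$, none of which equals $d$, the all-ones vector is not contained in it; adjoining it raises the dimension by one to $2m+1$. Thus $\dim C=2m+1$, and with $\dim RM(1,m)=m+1$ we obtain $f=2^{(2m+1)-(m+1)}=2^{m}=d$.

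Since $f=d$, this set meets the upper bound of Theorem~\ref{thm:bound1}, which completes the argument. I expect the main obstacle to be the dimension bookkeeping: one must verify that the cyclotomic cosets behave exactly as claimed for odd $m$ (so that $\dim\mathcal{B}(2,m)^\perp=2m$ precisely, not merely a lower bound) and that the all-ones vector genuinely lies outside the extended dual BCH code, since the value of $f$, and hence the attainment of the bound, depends entirely on securing $\dim C=2m+1$.
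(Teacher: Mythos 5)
Your proof is correct and follows exactly the paper's own (first) construction: apply Lemma~\ref{lem:code} and Proposition~\ref{prop:MQUWM} to the code generated by the extended code of $\mathcal{B}(2,2t+1)^\perp$ and the all-ones vector. The paper's proof is a one-line invocation of these results (it also offers an alternative via the $\mathbb{Z}_4$-Kerdock code and Lemma~\ref{lem:z4code}); your dimension count showing $\dim C = 2m+1$ and hence $f = 2^{(2m+1)-(m+1)} = d$ is correct and supplies the bound-attainment detail the paper leaves implicit.
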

\begin{proof}
We provide two constructions of MQUWM.
(1) Apply Lemma~\ref{lem:code} and Proposition~\ref{prop:MQUWM} to the code generated by the extended code of dual of $\mathcal{B}(2,2t+1)$ and the all-ones vector.

(2) Apply Lemma~\ref{lem:z4code} and Proposition~\ref{prop:MQUWM} to the $\mathbb{Z}_4$-Kerdock code of  length $d$.
\end{proof}
\begin{remark}
The Gray image of $\mathbb{Z}_4$-Kerdock code is no longer linear over $\mathbb{Z}_2$, but it still constructs MQUWM.
\end{remark}


\section{MQUWM for parameters $(d,2,4,1)$} \label{sec:4}
In the present section, we show an upper bound for 
the number of  MQUWM for parameters $(d,2,4,1)$, and give 
examples attaining the bound. 

\begin{theorem}\label{thm:bound2}
Let $W=\{W_1,\ldots, W_f\}$ be a set of  mutually quasi-unbiased 
weighing matrices for parameters $(d,2,4,1)$. Then we have 
\[
f \leq 
d-1.
\]
\end{theorem}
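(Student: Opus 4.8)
The plan is to use Proposition~\ref{prop:MQUWM} to convert the statement into a counting problem about the roots of the lattice $D_d$, and then to bound $f$ by comparing the number of available vectors with the number consumed by $f$ disjoint cross polytopes.

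First I would apply Proposition~\ref{prop:MQUWM} with $k=2$ and $a=1$, so that $k^2/a=4$ and $\sqrt a=1$. This makes the existence of $W$ equivalent to the existence of a nonempty subset $X\subseteq\Omega_{d,2}$ with $A(X)=\{\pm1,0,-2\}$ admitting a cross polytope decomposition $\{X_1,\ldots,X_f\}$. The key observation is that $\Omega_{d,2}$ is exactly the set of $(0,\pm1)$-vectors of squared length $2$: every such vector has precisely two nonzero coordinates, each equal to $\pm1$. Hence $\Omega_{d,2}$ is the root system of $D_d$, and in particular $|\Omega_{d,2}|=4\binom{d}{2}=2d(d-1)$, since a vector is determined by the choice of its two nonzero positions and the two signs.

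Next I would count vertices. Each $X_i$ in the decomposition is a full $d$-dimensional cross polytope, i.e.\ a set $\{\pm v_1,\ldots,\pm v_d\}$ of $d$ mutually orthogonal vectors together with their negatives, so $|X_i|=2d$. Since $\{X_1,\ldots,X_f\}$ is a partition of $X$ into disjoint pieces and $X\subseteq\Omega_{d,2}$, we obtain $2df=|X|\le|\Omega_{d,2}|=2d(d-1)$, which yields $f\le d-1$.

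The only point that needs care is the equality $|X_i|=2d$, namely that each cross polytope is genuinely full-dimensional and its $2d$ listed vectors are distinct. This holds because the rows of the corresponding weight-$2$ weighing matrix $W_i$ are $d$ mutually orthogonal nonzero vectors, so $W_i$ has rank $d$; moreover a vector and its negative have inner product $-2\neq0$, so $S(W_i)$ and $S(-W_i)$ are disjoint of size $d$ each. I expect this bookkeeping, not any deep idea, to be the main obstacle. Finally, it is worth noting the connection with Lemma~\ref{lem:maxframes}: a cross polytope among the roots of $D_d$ is precisely a $2$-frame, and for even $d$ a $1$-factorization of $K_d$ produces $d-1$ disjoint such frames, so the bound $f\le d-1$ is attained.
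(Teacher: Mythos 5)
Your proof is correct and is essentially the paper's own argument: the paper's one-line proof counts the row vectors of the $W_i$ inside the set $(\pm 1,\pm 1,0,\ldots,0)^P$ of size $2d(d-1)$, which is exactly your counting of $f$ disjoint cross polytopes of size $2d$ inside $\Omega_{d,2}$. You merely make explicit (via Proposition~\ref{prop:MQUWM} and the distinctness/antipodality bookkeeping) the details the paper leaves as ``clearly.''
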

\begin{proof}
Every row vector of $W_i$ is in $(\pm 1, \pm 1, 0,\ldots,0)^P$ whose size $2d(d-1)$. This clearly shows the theorem.  
\end{proof}
There does not exist a weighing matrix of weight $2$ for odd order \cite{BKR13,CRS86}. 
For even order, we have a set of  MQUWM meeting the upper bound in Theorem~\ref{thm:bound2}. 
\begin{theorem} \label{thm:deco}
Suppose $d$ is even. 
Then there exists a set of $d-1$ mutually quasi-unbiased 
weighing matrices for parameters $(d,2,4,1)$. 
\end{theorem}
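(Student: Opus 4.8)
The plan is to reduce the construction to a cross polytope decomposition via the equivalence of Proposition~\ref{prop:MQUWM}. For the parameters $(d,k,l,a)=(d,2,4,1)$ we have $\sqrt{a}=1$ and $k=2$, so by Proposition~\ref{prop:MQUWM} it suffices to produce a nonempty $X\subseteq\Omega_{d,2}$ with $A(X)=\{\pm 1,0,-2\}$ together with a cross polytope decomposition $\{X_1,\dots,X_{d-1}\}$ of $X$ into $f=d-1$ parts. The natural choice is $X=\Omega_{d,2}$ itself, which is precisely the set of roots $(\pm 1,\pm 1,0,\dots,0)^P$ of the root lattice $D_d$. In this way the whole problem is transferred to the geometry of $D_d$, where the relevant structure is already established.

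The heart of the argument is then supplied by Lemma~\ref{lem:maxframes}(2): for even $d\ge 4$ the roots of $D_d$ admit $m(D_d)=d-1$ disjoint $2$-frames, obtained from the $1$-factorization of $K_d$. Each such $2$-frame is the vertex set of a cross polytope consisting of $2d$ vectors of norm $\sqrt 2$, so $d-1$ pairwise disjoint frames account for $(d-1)\cdot 2d=2d(d-1)$ roots. Since $D_d$ has exactly $2d(d-1)$ roots, these frames must partition all of $\Omega_{d,2}$, and they therefore furnish the desired cross polytope decomposition $\{X_1,\dots,X_{d-1}\}$ of $X=\Omega_{d,2}$ with the correct number of parts.

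It remains to verify the inner product set. For two distinct roots $u,v\in\Omega_{d,2}$ the two-element supports overlap in $0$, $1$, or $2$ coordinates, giving inner products $0$, $\pm 1$, or $\pm 2$, with $+2$ occurring only for $u=v$; hence $A(X)\subseteq\{\pm 1,0,-2\}$. Each value is attained: within a single frame $F_M$ one sees $0$ (orthogonal pairs) and $-2$ (antipodes), while $\pm 1$ arises between frames $F_M,F_{M'}$ whose defining edges share a vertex, which occurs once $d\ge 4$. Thus $A(X)=\{\pm 1,0,-2\}=\{\pm\sqrt a,0,-k\}$, and Proposition~\ref{prop:MQUWM} with $f=d-1$ converts $X$ directly into the required $d-1$ MQUWM. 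The case $d=2$ is degenerate, with $f=d-1=1$: a single weighing matrix of order $2$ and weight $2$ (for instance the $2\times 2$ Hadamard matrix) satisfies the definition vacuously, there being no pair to test.

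I do not expect a genuine obstacle, since the combinatorially delicate step---the $1$-factorization of $K_d$ yielding $d-1$ mutually disjoint $2$-frames---is already discharged in Lemma~\ref{lem:maxframes}(2). The only points requiring care are confirming that the $d-1$ disjoint frames exhaust $\Omega_{d,2}$ (which the count $(d-1)\cdot 2d=2d(d-1)$ guarantees) and that every attained inner product lies in $\{\pm 1,0,-2\}$, after which the passage through Proposition~\ref{prop:MQUWM} is immediate.
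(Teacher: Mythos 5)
Your proof is correct and takes essentially the same route as the paper's: Lemma~\ref{lem:maxframes}(2) provides the $d-1$ disjoint $2$-frames decomposing the roots of $D_d$, and Proposition~\ref{prop:MQUWM} converts that cross polytope decomposition into the desired MQUWM. You additionally verify the inner-product set $\{\pm 1,0,-2\}$, the exhaustion count $2d(d-1)$, and the degenerate case $d=2$ (where both the cited lemma and the proposition, which assumes $f\geq 2$, do not literally apply) --- details the paper's two-line proof leaves implicit.
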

\begin{proof} 
By Lemma~\ref{lem:maxframes} (2), the roots $(\pm 1, \pm 1, 0, \ldots, 0)^P$ of the $D_d$ lattice  are decomposed into 
$d-1$ disjoint $2$-frames. 
By Proposition~\ref{prop:MQUWM}, we obtain MQUWM for parameters $(d,2,4,1)$.
\end{proof}

\section{MUWM of weight $4$}\label{subsec:frame}
In the present section, we introduce the relationship between MUWM of weight 4 and disjoint $2$-frames in a root lattice, and determine the maximum number of MUWM of weight 4 for any order. 

By Proposition~\ref{prop:MUWM}, 
the existence of MUWM of order $d$ and weight $4$ is equivalent to 
that of a finite subset $X$ in  $2S^{d-1}$ such that $A(X)=\{2,0,-2,-4\}$ and 
$X$ has a cross polytope decomposition. The set $(1/\sqrt{2}) X$ is identified with 
a subset of roots in a root lattice, and the cross polytopes correspond to disjoint $2$-frames.   
If we determine a root lattice of rank $d$ which has the maximum 
number of disjoint $2$-frames for fixed $d$, then we can obtain the maximum size of a set of 
MUWM of order $d$ and weight $4$.  

The following is the main theorem in the present section. The value $m$ means the maximum size of a set of MUWM of order $d$ and weight $4$. 
\begin{theorem}
 The following root lattices have the maximum number $m$ of  disjoint $2$-frames for a fixed
rank $d$. 
\[
\begin{array}{c|cccccc}
d &5 &8  &9&11          &13                      &\text{even $d \geq 4 (d\ne 8)$}\\ \hline  
m &0 &14 &0&2           &4                       &d-2\\ \hline 
\text{{\rm lattice}} 
  &-&E_8&-&D_4\perp E_7&D_6\perp E_7  &D_d(d=16,E_8\perp E_8)
\end{array}
\]
\[
\begin{array}{c|c}
d&\text{odd $d \geq 15 $, $d=7$}\\ \hline
m & 8 \\ \hline
\text{{\rm lattice}}&
(\perp_{\text{$a$ copies}} E_7 )
\perp 
(\perp_{\text{$b$ copies}} E_8 )
\perp 
(\perp_i
(\perp_{\text{$t_i$ copies}}D_{d_i}))\\ 
&\text{even } d_i \geq 10, d=7a+8b+\sum_i t_id_i\quad (a \ne 0)
\end{array}
\] 
For odd $d\geq 17$, we have the lattice $E_7 \perp D_{d-7}$ giving $m=8$ in the above table. 
\end{theorem}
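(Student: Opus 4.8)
The plan is to reduce the whole problem to a combinatorial optimization over root lattices and then run a short case analysis on $d$. By Proposition~\ref{prop:MUWM}, a set of $f$ MUWM of order $d$ and weight $4$ is exactly a cross polytope decomposition $\{X_0,X_1,\dots,X_f\}$ of a configuration of (rescaled) roots in a rank-$d$ root lattice; since such a decomposition has $f+1$ parts, one of which is the distinguished base frame $X_0$, the maximum size of a set of MUWM equals $m(L)-1$ for a rank-$d$ root lattice $L$ chosen to maximize the number $m(L)$ of disjoint $2$-frames. So the task is to maximize $m(L)$ over all root lattices of rank $d$. Invoking Lemma~\ref{lem:red} to write $m(L)=\min_i m(L_i)$, and Lemma~\ref{lem:maxframes} together with the trivial value $m(A_1)=1$ ($A_1$ being a single antipodal pair of roots), the only irreducible components with $m(L_i)>0$ are
\[
A_1\ (m=1),\qquad D_{2s}\ (s\ge2,\ m=2s-1),\qquad E_7\ (m=9),\qquad E_8\ (m=15),
\]
of ranks $1,\ 2s,\ 7,\ 8$. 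Thus $m(L)>0$ forces every component into this list, and the problem becomes: split $d$ into these ranks so as to maximize the minimum of the attached values. Two facts drive every estimate: the best value of $\min_i m(L_i)$ attainable on a sublattice of rank $2,4,6$ is only $1,3,5$ (via $A_1^2$, $D_4$, $D_6$), and $E_8$ with $m=15$ exceeds its own rank.

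For even $d$ I would take $L=D_d$, giving $m(D_d)=d-1$ and hence $d-2$ MUWM, and record the coincidences $d=8$ (where $E_8$ gives $m=15$) and $d=16$ (where $E_8\perp E_8$ gives $m=15=d-1$). The matching upper bound follows because $m(L)\ge d$ would force every component to have $m\ge d$; but then no $A_1$ can occur (it has $m=1$) and no $D$-block can occur (it would need rank $>d$, exceeding the total), so every component is $E_7$ (possible only if $d\le9$) or $E_8$ (only if $d\le15$). Ranks $7$ and $8$ sum to the total $d$ under these constraints only for $d=8$ (a single $E_8$). Hence $m(L)\le d-1$ for even $d\neq8$, attained by $D_d$ (MUWM $=d-2$), while $d=8$ gives $m(L)=15$ (MUWM $=14$).

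For odd $d$ the parity of the rank forces an odd number of odd-rank components, i.e. of copies of $A_1$ or $E_7$. Since $A_1$ caps $\min_i m(L_i)$ at $1$ (so $0$ MUWM), any configuration with positive MUWM count must use at least one $E_7$, which caps $m(L)$ at $9$ and MUWM at $8$. To attain $m(L)=9$ I would take one $E_7$ and fill the even leftover rank $d-7$ with blocks keeping $m\ge9$, namely $E_8$ ($m=15$) and $D_{d_i}$ with even $d_i\ge10$ ($m=d_i-1\ge9$); this is possible exactly when $d-7\in\{0,8,10,12,\dots\}$, that is for $d=7$ and all odd $d\ge15$, with the single choice $E_7\perp D_{d-7}$ available once $d\ge17$. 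The exceptional odd values fall out of the same leftover analysis: after removing the forced $E_7$, the leftover has rank $2,4,6$ for $d=9,11,13$, where $\min_i m(L_i)$ is at most $1,3,5$, giving $\min(9,\cdot)=1,3,5$ and hence $0,2,4$ MUWM, realized by $-$, $D_4\perp E_7$, $D_6\perp E_7$; similarly $d=5$ is forced to contain an $A_1$ and yields $0$.

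The step I expect to be the main obstacle is precisely this upper-bound bookkeeping in the exceptional ranges, where one must certify that no clever mixture of $E_7$, $E_8$ and $D_{2s}$ beats the advertised lattice. The crux is the anomalous behaviour of $E_8$, whose value $m=15$ outruns its rank and makes $d=8$ and $d=16$ special, together with the complete absence of any positive-value building block of rank $2$ or $3$ with $m\ge2$; this is exactly what breaks the smooth pattern at $d=9,11,13$ and forces the separate small-case verification, with the inputs $m(D_4)=3$, $m(D_6)=5$, $m(E_7)=9$, $m(E_8)=15$ from Lemma~\ref{lem:maxframes} making each bound tight.
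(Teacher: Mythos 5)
Your proof is correct and follows essentially the same route as the paper: the paper's entire proof is the remark that one considers the possible irreducible components of a rank-$d$ root lattice and applies Lemmas~\ref{lem:red} and \ref{lem:maxframes}, which is precisely the case analysis you carry out explicitly. If anything, your write-up is more complete, since you make explicit the off-by-one between disjoint $2$-frames and MUWM (the distinguished frame $X_0$ in Proposition~\ref{prop:MUWM}) and the role of rank-one $A_1$ components with $m(A_1)=1$, a case Lemma~\ref{lem:maxframes} leaves implicit.
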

\begin{proof}
For each rank $d$, we consider possible irreducible components 
of a root lattice. By Lemmas~\ref{lem:red} and \ref{lem:maxframes}, we obtain
the maximum number of disjoint $2$-frames.  
\end{proof}

\noindent
\textbf{Acknowledgments.}
We would like to thank the two anonymous refrees for their  valuable comments for the first version of this paper. 
The first author was supported by JSPS KAKENHI Grant Number 25800011.


\end{document}